\numberwithin{equation}{section}
\newtheorem{theorem}{Theorem}[section]
\newtheorem{proposition}[theorem]{Proposition}
\def\R{\mathbb{R}}
\def\RN{\mathbb{R}^N}
\def\S{\Sigma}
\def\e{\varepsilon}
\begin{document}

\title[Nontrivial solutions to the relative overdetermined torsion problem]{Nontrivial solutions to the relative overdetermined torsion problem in a cylinder}

\author{Filomena Pacella}
\address{(F.~Pacella)
        Dipartimento di Matematica, Università di Roma Sapienza, P.le Aldo Moro 5, 00185 Roma, Italy
} \email{filomena.pacella@uniroma1.it}

\author{David Ruiz}
\address{(D.~Ruiz)
	IMAG, Departamento de An\'alisis matem\'atico, Universidad de Granada,
	Campus Fuente-nueva, 18071 Granada, Spain} \email{daruiz@ugr.es}

\author{Pieralberto Sicbaldi}
\address{(P.~Sicbaldi)
	IMAG, Departamento de An\'alisis matem\'atico,
	Universidad de Granada,
	Campus Fuentenueva,
	18071 Granada,
	Spain \& Aix Marseille Universit\'e - CNRS, Centrale Marseille - I2M, Marseille, France}
\email{pieralberto@ugr.es}

\thanks{{\bf Acknowledgements.} F. P. is member of GNAMPA-INdAM and has been supported by PRIN 2022AKNS E4 - Next Generation EU.
        D. R. has been supported by the FEDER-MINECO Grant PID2021-122122NB-I00 and by J. Andalucia (FQM-116).
	P. S has been supported by the FEDER-MINECO Grant PID2020-117868GB-I00.
	D. R. and P. S. also acknowledge financial support from the Spanish Ministry of Science and Innovation (MICINN), through the \emph{IMAG-Maria de Maeztu} Excellence Grant CEX2020-001105-M/AEI/10.13039/501100011033.}

\keywords{Relative overdetermined problems; Torsion problem; Local bifurcation.}

\subjclass[2020]{35B32; 35G15; 35N25.}

\maketitle

\noindent

\noindent
\begin{abstract}
	Given a bounded regular domain $\omega \subset \R^{N-1}$ and the half-cylinder $\Sigma = \omega \times (0,+\infty)$, we consider the relative overdetermined torsion problem in $\Sigma$, i.e. 
	\begin{equation*}
		\begin{cases}
			\Delta {u}+1=0 &\mbox{in $\Omega$},\\
			\partial_\eta u = 0 &\mbox{on $\widetilde \Gamma_\Omega$},\\
			u=0 &\mbox{on $\Gamma_\Omega$},\\
			\partial_{\nu}u =c  &\mbox{on $\Gamma_\Omega$}.
		\end{cases}
	\end{equation*}
where $\Omega \subset \Sigma$, $\Gamma_\Omega = \partial \Omega \cap \Sigma$, $\widetilde \Gamma_\Omega = \partial \Omega \setminus \Gamma_\Omega$, $\nu$ is the outer unit normal vector on $\Gamma_\Omega$ and $\eta$ is the outer unit normal vector on $\widetilde \Gamma_\Omega$. We build nontrivial solutions to this problem in domains $\Omega$ that are the hypograph of certain nonconstant functions $v : \overline{\omega} \to (0, + \infty)$.
Such solutions can be reflected with respect to $\omega$, giving nontrivial solutions to the relative overdetermined torsion problem in a cylinder. The proof uses a local bifurcation argument which, quite remarkably, works for any generic base $\omega$.
	
\end{abstract}

\section{Introduction}
\label{Section 1}

If $\Omega$ is a domain of $\mathbb{R}^N$ ($N\geq 2$) the classical overdetermined torsion problem in $\Omega$ is given by:
\begin{equation}\label{serrin}
	\begin{cases}
		\Delta {u}+1=0 &\mbox{in $\Omega$},\\
		u=0 &\mbox{on $\partial \Omega$},\\
		\partial_\nu u = c &\mbox{on $\partial \Omega$},
	\end{cases}
\end{equation}
where $\nu$ is the outer unit normal vector on $\partial \Omega$ and $c$ is a constant. Such overdetermined problem arises when one studies critical points of the torsion energy functional
\[
J(\Omega) = \min_{v \in H_0^1(\Omega)}  \left ( \frac12 \int_{\Omega} |\nabla v|^2 - \int_\Omega v \right)
\]
with respect to volume-preserving deformations of the domain. It is well known that \eqref{serrin} admits a solution if and only if $\Omega$ is a ball, see \cite{serrin}.

\medskip

If we change the ambient space $\mathbb{R}^N$ by a certain open set $\Sigma \subset \mathbb{R}^N$ the expression of the torsion energy functional does not change but now the space $H_0^1(\Omega)$ is naturally substituted by the closure of the space $$\{\varphi\in C^\infty(\Omega), \;\;\textrm{Supp}(\varphi)\subset \Omega\cup\partial \Sigma\}$$ with respect to the $H^1$-norm.
The study of critical points of $J$ with respect to volume-preserving deformations of the domain leads to the {\it relative} overdetermined torsion problem in $\Sigma$ (see Proposition 2.6 in \cite{afonso-iacopetti-pacella-energy}), i.e.
\begin{equation}\label{main}
	\begin{cases}
		\Delta {u}+1=0 &\mbox{in $\Omega$},\\
		\partial_\eta u = 0 &\mbox{on $\widetilde \Gamma_\Omega$},\\
			u=0 &\mbox{on $\Gamma_\Omega$},\\
			\partial_{\nu}u =c  &\mbox{on $\Gamma_\Omega$}.
	\end{cases}
\end{equation}
Here $\Gamma_\Omega = \partial \Omega \cap \Sigma$, $\widetilde \Gamma_\Omega = \partial \Omega \setminus \Gamma_\Omega$, $\nu$ is the outer unit normal vector on $\Gamma_\Omega$ and $\eta$ is the outer unit normal vector on $\widetilde \Gamma_\Omega$. Notice that the overdetermined condition appears only on $\Gamma_\Omega$, that is usually called the {\it relative or free boundary} of $\Omega$, while the zero boundary condition is of mixed type instead of Dirichlet type. Let us point out that 
the existence of a solution to \eqref{main}, under the assumption that $\Gamma_\Omega$ is transversal to $\widetilde \Gamma_\Omega$, forces $\Gamma_\Omega$ to intersect $\partial \Sigma_\omega$ orthogonally, see \cite{lamboley-sicbaldi}.

\medskip

In this paper we consider the relative overdetermined torsion problem in a cylinder. We look for solutions which are even with respect to one variable, so we can restrict ourselves to a half-cylinder. More precisely, given $\omega \subset \mathbb{R}^{N - 1}$ a bounded regular domain, we define: 
\begin{equation}
   \Sigma = \Sigma_\omega = \{(x', x_N) \in \mathbb{R}^{N - 1} \ | \ x' \in \omega, \ x_N \in (0, + \infty)\}.
\end{equation}
A natural question is to characterize domains $\Omega$ for which \eqref{main} admits a solution. For instance, half-balls lying on $\omega$ provide solutions to \eqref{main}, where the solution $u$ is radially symmetric. 
In \cite{afonso-iacopetti-pacella-RLM} some qualitative properties of domains admitting solutions to \eqref{main} were shown, in particular when $\Omega$ is the hypograph of a $C^2$ function $v : \omega \to \mathbb{R}$, i.e. 
\begin{equation*}
    \Omega = \Omega_v = \{(x', x_N) \in \mathbb{R}^N \ | \ x' \in \omega, \ x_N \in (0, v(x'))\}\,.
\end{equation*}
Here $\Gamma_{\Omega_v}$ is just the cartesian graph of $v$ over $\omega$. It is obvious that if $v(x)=t>0$, then \eqref{main} admits a 1D solution \begin{equation}
	\label{eq:U_t}
	U_t(x', x_N) = \frac{t^2 - x_N^2}{2}.
\end{equation}
We will write $U_t(x_N)$ instead of $U_t(x', x_N)$ and refer often to $\Omega_t$ as the trivial domains.
Thus the question is whether rigidity results could be obtained characterizing $\Omega_t$ as the only domains for which \eqref{main} admits a solution, at least for some values of $|\Omega|$ or at least in the class of hypographs. A first step in understanding this question has been made in \cite{afonso-iacopetti-pacella-energy}, where more general relative overdetermined problems in half-cylinders have been considered together with their variational formulation. The stability analysis performed in that paper suggests that nontrivial solutions to \eqref{main} could exist, depending on the first nonzero Neumann eigenvalue of the Laplacian 
on the base $\omega \subset \mathbb R^{N - 1}$ (see Theorem 1.3 in \cite{afonso-iacopetti-pacella-energy}).
The aim of the present paper is to confirm this fact and to show that, indeed, there exist infinitely many hypographs bifurcating from the trivial ones for which \eqref{main} admits a solution.\\

In order to state our result let us consider the eigenvalue problem of the Laplacian operator under Neumann boundary conditions on $\omega \subset \mathbb R^{N - 1}$:
\begin{equation}
    \label{eq:Neumann_eig_prob}
    \begin{cases}
        -\Delta \psi = \lambda \psi & \mbox{in $\omega$},\\
		\partial_\eta \psi = 0 &\mbox{on $\partial \omega$}\,.
    \end{cases}
\end{equation}
Let $\lambda_k$ be its sequence of eigenvalues where
$$
0 = \lambda_0 < \lambda_1 \leq \lambda_2 \leq \ldots
$$
Moreover we denote by $\beta$ the unique solution of the equation:
\begin{equation}\label{beta}
\sqrt{\beta} \tanh(\sqrt{\beta}) = 1,
\end{equation}
which is given by $\beta = 1.43923...$\,.
In this paper we prove the following result:

\begin{theorem} \label{Tmain} Assume that $\lambda_j$ is a simple eigenvalue for some $j \geq 1$. Then there exists a smooth curve:
$$ (-\e, \e) \ni s \mapsto v(s) \in C^{2,\alpha}(\omega)$$ such that:
\begin{enumerate}
	\item $v(0)$ is the constant function $\sqrt{\frac{\beta}{\lambda_j}}$ ;
	\item $v(s)$ is a nonconstant function for $s \neq 0$;
	\item the problem \eqref{main} admits a solution with $\Omega = \Omega_{v(s)}$, being the constant $c$ depending on $s$.
\item $\Gamma_{\Omega_{v(s)}}$ intersects $\partial \Sigma_\omega$ orthogonally.
\end{enumerate} 
\end{theorem}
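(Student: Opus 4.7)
My plan is to apply the Crandall--Rabinowitz bifurcation theorem from a simple eigenvalue to a functional that encodes the overdetermined Neumann condition. For $v\in C^{2,\alpha}(\overline\omega)$ close to a positive constant $t$ and satisfying $\partial_\eta v=0$ on $\partial\omega$, the graph $\Gamma_{\Omega_v}$ meets $\partial\Sigma_\omega$ orthogonally, which (as noted in the discussion preceding \eqref{main}) is the natural condition making the mixed problem
\[
\Delta u_v+1=0\text{ in }\Omega_v,\qquad u_v=0\text{ on }\Gamma_{\Omega_v},\qquad \partial_\eta u_v=0\text{ on }\widetilde\Gamma_{\Omega_v}
\]
well posed in $C^{2,\alpha}$ up to the edge $\partial\omega\times\{v\}$ (by local reflection of the Neumann conditions across $\partial\omega\times(0,\infty)$ followed by Schauder theory on a smooth reference domain). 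Pulling back via $(x',y_N)\mapsto(x',v(x')\,y_N/t)$ to $\omega\times(0,t)$ and invoking the implicit function theorem yields a smooth map $v\mapsto u_v$. I then consider the functional
\[
F(\phi,t)(x'):=\partial_\nu u_{t+\phi}\bigl(x',(t+\phi)(x')\bigr)-\overline{\partial_\nu u_{t+\phi}},
\]
defined on a neighbourhood of $(0,\sqrt{\beta/\lambda_j})$ in $X\times(0,\infty)$, where
\[
X=\Bigl\{\phi\in C^{2,\alpha}(\overline\omega):\ \partial_\eta\phi=0\text{ on }\partial\omega,\ \textstyle\int_\omega\phi=0\Bigr\},
\]
with the target being the analogous codimension-one subspace of $C^{1,\alpha}(\overline\omega)$. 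Zeros of $F$ correspond to solutions of \eqref{main} on $\Omega_{t+\phi}$ for some constant $c$, and $F(0,t)=0$ is the trivial branch.

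\textbf{Linearization via Neumann eigenfunctions.} Expand $\phi=\sum_{k\ge1}a_k\psi_k$ in the zero-mean basis of eigenfunctions of \eqref{eq:Neumann_eig_prob}. Differentiating $u_{t+s\phi}=0$ along $\Gamma_{\Omega_{t+s\phi}}$ at $s=0$ and using $\partial_{x_N}U_t(t)=-t$ shows that $w=\partial_s u_{t+s\phi}|_{s=0}$ is harmonic in $\Omega_t=\omega\times(0,t)$ with $w(x',t)=t\phi(x')$ and homogeneous Neumann data on $\partial\omega\times(0,t)\cup(\omega\times\{0\})$. Separation of variables gives
\[
w(x',x_N)=\sum_{k\ge1}\frac{t\,a_k\cosh(\sqrt{\lambda_k}\,x_N)}{\cosh(\sqrt{\lambda_k}\,t)}\,\psi_k(x').
\]
Since $\nabla_{x'}U_t\equiv0$, the variation of the unit normal to $\Gamma_{\Omega_v}$ enters only at second order; using $\partial_{x_N}^2U_t=-1$ one obtains the diagonal expression
\[
D_\phi F(0,t)[\phi]=\sum_{k\ge1}\bigl(\sqrt{\lambda_k}\,t\tanh(\sqrt{\lambda_k}\,t)-1\bigr)\,a_k\,\psi_k.
\]

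\textbf{Bifurcation points and Crandall--Rabinowitz.} The function $h(r)=\sqrt{r}\tanh(\sqrt{r})-1$ is strictly increasing from $-1$ at $r=0$ to $+\infty$, so $h(\beta)=0$ has a unique positive root, matching \eqref{beta}. Hence $\psi_k\in\ker D_\phi F(0,t)$ precisely when $\lambda_k t^2=\beta$, i.e. $t=\sqrt{\beta/\lambda_k}$. When $\lambda_j$ is simple, the kernel at $t_j=\sqrt{\beta/\lambda_j}$ is $\mathrm{span}(\psi_j)$ and the range is the $L^2$-orthogonal complement of $\psi_j$ inside the target space, which is Fredholm of index $0$ because the multipliers $h(\lambda_k t^2)\to\infty$. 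Differentiating the $k=j$ multiplier in $t$ yields
\[
\partial_t\bigl(\sqrt{\lambda_j}\,t\tanh(\sqrt{\lambda_j}\,t)\bigr)\Big|_{t=t_j}=\sqrt{\lambda_j}\tanh(\sqrt{\lambda_j}\,t_j)+\lambda_j t_j\,\mathrm{sech}^2(\sqrt{\lambda_j}\,t_j)>0,
\]
so $\partial_t D_\phi F(0,t_j)[\psi_j]$ is a nonzero multiple of $\psi_j$ and therefore lies outside the range. The Crandall--Rabinowitz theorem produces a smooth curve $s\mapsto(\phi(s),t(s))$ with $\phi(0)=0$, $t(0)=t_j$, $\phi'(0)=\psi_j$ and $F(\phi(s),t(s))=0$; setting $v(s)=t(s)+\phi(s)$ gives the curve of Theorem~\ref{Tmain}. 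Properties (1)--(2) follow from $\phi(s)=s\psi_j+O(s^2)$ together with the nonconstancy of $\psi_j$; (3) is the content of $F=0$, which says $\partial_\nu u_{v(s)}$ is constant on $\Gamma_{\Omega_{v(s)}}$; and (4) is encoded in the Neumann condition built into $X$.

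\textbf{Main obstacle.} The bifurcation bookkeeping and the Fourier computation are standard; the analytic subtlety is the smoothness of $v\mapsto u_v$ in Hölder spaces up to the edge where the Dirichlet datum on $\Gamma_{\Omega_v}$ meets the Neumann datum on $\widetilde\Gamma_{\Omega_v}$. The orthogonality encoded by $\partial_\eta v=0$ is precisely what enables the local reflection that turns the mixed boundary value problem into a problem with smooth boundary conditions, allowing Schauder theory and the implicit function theorem to apply. Once this framework is in place, all remaining steps reduce to the explicit separation-of-variables argument above.
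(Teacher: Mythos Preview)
Your proposal is correct and follows essentially the same strategy as the paper: set up a Dirichlet-to-Neumann type operator $F$ whose zeros correspond to solutions of \eqref{main}, compute its linearization at the trivial branch $v\equiv t$ by separation of variables (obtaining the multipliers $t\sqrt{\lambda_k}\tanh(t\sqrt{\lambda_k})-1$), identify the bifurcation value $t_*=\sqrt{\beta/\lambda_j}$, and verify the Fredholm and transversality hypotheses of Crandall--Rabinowitz. The paper organizes the linearization computation through a pullback to a fixed domain and an explicit Dirichlet-to-Neumann operator $H_t$ (Propositions~\ref{Pr31} and~\ref{H}), but your direct chain-rule derivation of $D_\phi F(0,t)$ yields the same formula, and your transversality check coincides with the paper's observation that $f'(\sqrt{\beta})>0$.
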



We point out that, for a generic domain $\omega$, all Neumann eigenvalues are simple (see \cite{henry}). It is important to remark that our result does not depend on the particular cylinder considered and highlights the role played by the Neumann eigenvalues $\lambda_j$ of the base $\omega$, whatever the shape of $\omega$ is. 
We also observe that the solutions to \eqref{main} can be reflected with respect to the base $\omega$ of the half-cylinder (because of the zero Neumann condition) obtaining new solutions to the relative overdetermined torsion problem in a (full) cylinder, with free boundary given by two connected components symmetric with respect to a horizontal hyperplane.

\medskip

The proof of Theorem 1.1 does not use the approach of \cite{afonso-iacopetti-pacella-energy} but relies on a bifurcation analysis in the same spirit of other recent papers building nontrivial solutions to overdetermined problems in unbounded domains of $\mathbb{R}^N$, such as \cite{fall-minlend-weth, S10, SS12, RRS20, RSW22}.

\medskip


In order to put our result in perspective, it is interesting to compare the overdetermined torsion problems with constant mean curvature (CMC) embedded surfaces, that arise (in the compact case) when one studies critical points of the area functional of the boundary of a domain with respect to volume-preserving deformations. Since the seminal paper of Serrin (\cite{serrin}), which used the moving planes method introduced by Alexandroff (\cite{alexandroff}) for the study of CMC surfaces, the two frameworks have been often compared. In the bounded case there is a one-to-one correspondence: the bounded domains where \eqref{serrin} has a solution are balls as the CMC compact surfaces embedded in $\mathbb{R}^n$ are spheres. For unbounded domains in general it is not true that the boundary of a domain where \eqref{serrin} can be solved is a CMC surface. Nevertheless, the domains discovered in \cite{fall-minlend-weth} can be put in correspondence with the CMC Delaunay surfaces because they share a similar shape. A natural question is if there exists a (one-to-one) correspondence between solutions to \eqref{serrin} in unbounded domains and unbounded embedded $n$-dimensional CMC surfaces. Observe that by rescaling CMC surfaces appropriately one obtains a minimal surface and, analogously, by rescaling domains $\Omega$ and solutions $u$ of \eqref{serrin} one arrives to a harmonic function with overdetermined boundary conditions. Quite surprisingly, Traizet proved for $n=2$ a one-to-one correspondence between these two classes of objects, i.e. minimal surfaces and harmonic overdetermined problems (\cite{T13}).

\medskip

In the context of the relative overdetermined torsion problem the natural question is if there is a correspondence between solutions to \eqref{main} and free boundary embedded CMC surfaces in the same ambient space $\Sigma \subset \mathbb{R}^n$, which are embedded CMC surfaces hitting $\partial \Sigma$ orthogonally. These surfaces arise when one studies critical points of the area functional of the free boundary of a domain $\Omega \subset \Sigma$ with respect to volume-preserving deformations.
The result of Theorem \ref{Tmain} provides a negative answer to the question. Indeed, 
it is not difficult to show that embedded free boundary CMC graphs in a cylinder must be flat, (see for instance \cite[Proposition 2.1]{afonso-iacopetti-pacella-RLM}). On the other hand, by virtue of our Theorem \ref{Tmain}, there are one-parameter families of nontrivial free boundary hypographs $\Omega_s \subset \Sigma_w$, $s \in (-\e, \e)$, admitting a solution to \eqref{main}.

\medskip

Finally we compare our result with the relative overdetermined torsion problem in cones studied in \cite{afonso-iacopetti-pacella-energy}, \cite{iacopetti-pacella-weth}, \cite{pacella-tralli}. In this framework a rigidity result can be obtained in convex cones showing that the only domains admitting solutions to such problem are spherical sectors, see Theorem 1.1 in \cite{pacella-tralli}. Parallel rigidity results can be obtained for free boundary CMC surfaces inside a cone, see Theorem 1.2 in \cite{pacella-tralli}. On the other hand, at least for the relative overdetermined torsion problem, the symmetry result breaks down for a class of nonconvex cones, (see \cite{iacopetti-pacella-weth}). 
Instead, in the case of the half-cylinder, the shape of the domain $\omega$, and in particular its convexity, does not prevent the existence of nontrivial domains for the relative overdetermined torsion problem. 

\medskip

The paper is organized as follows. In Section \ref{sec:rel_dic_prob} we define and study a normal derivative operator associated to each hypograph. In Section \ref{sec:linearization} and Section \ref{sec:linearized_operator_H_t} we analyze its linearization and study the related eigenvalues. In Section \ref{sec:proof_Tmain} we prove Theorem \ref{Tmain} by means of the Crandall-Rabinowitz Theorem.

\section{The relative Dirichlet problem and normal derivative operator}
\label{sec:rel_dic_prob}

Let $\omega$ be a bounded and regular domain of $\R^{N-1}$. Let us consider the half-cylinder $\S_\omega \subset \RN$ with base $\omega$, i.e.
\[
\S_\omega = \{\, (x', x_N) \in \R^{N-1} \times \R \, |\, x' \in \omega\,, \, x_N \in (0, +\infty)\, \}\,.
\]
Given a continuous function $v: \omega \rightarrow(0,\infty),$ we 
denote
\[
\Omega_v=\left\{(x',x_N)\in \S_{\omega} \,|\, x' \in \omega\, , \, 0 < x_N<v(x') \right\}.
\]
We write the boundary $\partial \Omega_v$ as $\Gamma_v \cup \tilde  \Gamma_v$, where $\Gamma_v = \partial \Omega_v \cap \S_{\omega}$ and $\tilde \Gamma_v = \partial \Omega_v \cap \partial \S_{\omega}$. { In this domain we will also denote by $\eta$ the unit normal vector on $\tilde{\Gamma}_v$; instead, $\nu$ stands for the unit normal vector on $\Gamma_v$. Let us point out that on $\partial \Gamma_v$ both normal vectors are different, and they are orthogonal if $\partial_{\eta} v =0$ on $\partial \omega$.}  
In the case $v \equiv t \in \mathbb{R}$ we use the notations $\Omega_t$, $\Gamma_t$, $\widetilde \Gamma_t$ and in the special case $t = 1$ we simply write $\Omega_1 = \Omega$, $\Gamma_1 = \Gamma$, $\widetilde \Gamma_1 = \widetilde \Gamma$.

Fix $\alpha \in (0,1)$.  For $k \in \mathbb{N} \setminus \{0\}$, define:
\begin{equation} \label{space} X_k= \{ v \in C^{k,\alpha}(\omega):\  \partial_{\eta} v=0 \mbox{ on }\partial \omega\}, \end{equation} 
\begin{equation} \label{space2} \tilde{X}_k= \{ v \in X_k: \ \int_{\omega} v=0\}, \end{equation} endowed with the usual $C^{k, \alpha}$ norm, denoted as $\| \cdot \|_{C^{k,\alpha}}$.
Clearly, any function $v \in X_k$ can be decomposed as $v = \tilde{v} + h$, where $\tilde{v} \in \tilde{X}_k$, $h \in \R$. Indeed,
$$h = \frac{1}{|\omega|} \int_{\omega} v.$$

For any $v \in X_k$ we consider the problem:
\begin{equation}\label{dir}
	\begin{cases}
		\Delta {u}+1=0 &\mbox{in $\Omega_{v},$}\\
		u=0 &\mbox{on $\Gamma_{v},$}\\
		\partial_\eta u = 0 &\mbox{on $\tilde \Gamma_{v},$}
	\end{cases}
\end{equation}
This problem admits a unique weak solution $u_v$ in the space $H_0^1(\Omega_v \cup \widetilde \Gamma_v)$ which is the Sobolev space of the functions in $H^1(\Omega_v)$ whose trace vanishes on $\Gamma_v$. This comes by a standard application of the Lax-Milgram theorem. Moreover by the orthogonal conditions on the intersection between $\Gamma_v$ and $\partial \Sigma_\omega$ (implied by \eqref{space}) and by the Neumann condition on $\widetilde \Gamma_v$, we have that the $u_v$ is actually a $C^{2, \alpha}(\overline \Omega_v)$ solution of \eqref{dir} (see for example \cite{lamboley-sicbaldi, pacella-tralli}).

In the next proposition we want to show that, roughly speaking, the function $u_{v}$ depends on $v$ in a smooth way. In order to make this assertion rigorous, we use the following diffeomorphism  $Y_v: \Omega \to \Omega_{v}$,

\begin{equation} \label{Y}
	{ Y_v (x', x_N) : = \left(x',   v (x') x_N \right) },
\end{equation}
valid for any $v \in X_k$, $v>0$. 
The coordinates we consider from now on are $(x',x_N) \in \Omega$ and in these coordinates the metric  $g_v$ given by the pullback of the Euclidean metric by $Y_v$ can be written as
	\begin{equation} \label{metrica}
	g_v = d(x')^2 + x_N^2 dv^2 + 2v(x') x_Ndv dx_N + v(x')^2 dx_N^2\,.
	\end{equation}
	being $dv=\frac{\partial v}{\partial x_1} dx_1+ \cdots +\frac{\partial v}{\partial x_{N-1}}dx_{N-1}$ and $d(x') = (dx_1, ..., dx_{N-1})$.
Then, $u_v$ is a solution of problem \eqref{dir} if and only if $\phi_v= Y_v^* u$ solves:
\begin{equation}
	\label{formula-new}
	\left\{
	\begin{array}{rccl}
		L_v(\phi) + 1 & = & 0 & \mbox{on $\Omega$},\\
		 \phi& = & 0 &\mbox{on $\Gamma$},\\
		\partial_{ \eta}  \phi & = & 0 &\mbox{on $\tilde \Gamma$},
	\end{array}
	\right.
\end{equation}
where $L_v= \Delta_{g_v}$ is the Laplace-Beltrami operator with respect to the metric $g_v$. If we denote
	\[
	h(x') = \frac{1}{v(x')},
	\]
	we have
	\begin{eqnarray*}
		L_v (\phi) & = & \sum_{i=1}^{N-1} \left[ \phi_{ii} + \frac{x_N}{h}\, \left( 2 \phi_{iN}\, h_i  + \phi_{N}\, h_{ii} + \phi_{NN}\, (h_i)^2\, \frac{x_N}{h} + \phi_N\, (h_i)^2\right) \right] + \phi_{NN}\, h^2.
	\end{eqnarray*}
where subindices stand for partial derivatives.

For later use it is convenient to define the space:
$$ Z = \{ \phi \in C^{2, \alpha}(\Omega): \ \phi=0 \mbox{ on } \Gamma, \ \partial_\eta \phi=0 \mbox{ on } \tilde{\Gamma} \}.$$

%

\begin{proposition}
\label{dirichlet}
For all $v \in X_2$, $v>0$, there exists a unique positive solution $\phi_v \in {C}^{2, \alpha} (\Omega)$ to the problem \eqref{formula-new} (equivalently, there exists a unique positive solution $u_v$ of \eqref{dir}). In addition the map:
\begin{equation} \label{defPhi} \phi: \{ v \in X_2, \ v >0\} \to Z,  \ \phi(v)= \phi_v \end{equation}
is $C^2$.
\end{proposition}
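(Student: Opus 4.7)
The plan is to obtain the proposition as a direct application of the implicit function theorem to the map
\[
F: \mathcal{U} \times Z \longrightarrow C^{0,\alpha}(\overline{\Omega}), \qquad F(v,\phi) = L_v \phi + 1,
\]
where $\mathcal{U} = \{v \in X_2 : v > 0 \text{ on } \overline{\omega}\}$ is an open subset of $X_2$. The spaces $Z$ and $X_2$ already encode the boundary conditions, so the PDE information is entirely contained in the interior equation $F(v,\phi)=0$.

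First, for each fixed $v \in \mathcal U$ I would establish existence and uniqueness of $\phi_v \in Z$. The existence of a unique weak solution $u_v \in H^1_0(\Omega_v \cup \widetilde\Gamma_v)$ to \eqref{dir} is already granted by Lax--Milgram, as recalled in the excerpt. The key point is $C^{2,\alpha}$-regularity up to the boundary: since $v \in X_2$ satisfies $\partial_\eta v = 0$ on $\partial\omega$, the free boundary $\Gamma_v$ meets the lateral boundary $\widetilde{\Gamma}_v$ orthogonally, so the corner is regularizable (e.g., by even reflection across $\partial\Sigma_\omega$), and the results cited from \cite{lamboley-sicbaldi,pacella-tralli} give $u_v \in C^{2,\alpha}(\overline{\Omega_v})$. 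Pulling back via $Y_v$ then yields $\phi_v = Y_v^* u_v \in Z$.

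Next, I would verify the hypotheses of the implicit function theorem at an arbitrary $v_0 \in \mathcal U$. Smoothness of $F$: from the explicit formula for $L_v$ in the excerpt, the coefficients are polynomial expressions in $h=1/v$ and its first and second derivatives; since $v \mapsto 1/v$ is $C^\infty$ on $\mathcal{U}$ (where $v>0$), the map $v \mapsto L_v$ is $C^\infty$ from $\mathcal U$ to the Banach space of linear operators $C^{2,\alpha}(\overline{\Omega}) \to C^{0,\alpha}(\overline{\Omega})$, and $F$ is affine (hence $C^\infty$) in $\phi$. In particular $F$ is $C^2$. Invertibility of the linearization: since $F$ is affine in $\phi$, one has $D_\phi F(v_0,\phi_{v_0})[\psi] = L_{v_0}\psi$, and I must show $L_{v_0}: Z \to C^{0,\alpha}(\overline{\Omega})$ is an isomorphism. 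Given $f \in C^{0,\alpha}(\overline{\Omega})$, one solves the mixed problem $\Delta u = \tilde f$ on $\Omega_{v_0}$ with $u = 0$ on $\Gamma_{v_0}$ and $\partial_\eta u = 0$ on $\widetilde{\Gamma}_{v_0}$, where $\tilde f$ is the appropriate pushforward of $f$ by $Y_{v_0}$; existence and uniqueness follow from Lax--Milgram, and the $C^{2,\alpha}$ regularity argument from the first step applies verbatim because orthogonality at the corner persists. Pulling back yields a unique $\psi \in Z$ with $L_{v_0}\psi = f$, so $L_{v_0}$ is a Banach-space isomorphism by the open mapping theorem.

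With both hypotheses verified, the implicit function theorem produces a $C^2$ (actually $C^\infty$) map $v \mapsto \phi_v$ from a neighborhood of $v_0$ in $\mathcal U$ into $Z$, and uniqueness from the first step ensures these local maps patch into a global $C^2$ map on all of $\mathcal U$. The only genuinely delicate ingredient is the $C^{2,\alpha}$ regularity at the corner $\partial\Gamma_v \cap \partial\widetilde{\Gamma}_v$, which is where the structural definition \eqref{space} of $X_k$ via the condition $\partial_\eta v = 0$ on $\partial\omega$ plays its essential role; everything else is standard elliptic theory and the chain rule.
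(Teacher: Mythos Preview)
Your argument is correct and follows essentially the same route as the paper: define the map $(v,\phi)\mapsto L_v\phi+1$, observe it is smooth because the coefficients of $L_v$ are rational in $v$ and its derivatives, compute the partial derivative in $\phi$ as $L_{v_0}$, and show this is an isomorphism $Z\to C^{0,\alpha}$ by solving the linear mixed problem and invoking the same corner regularity. Your write-up is in fact slightly more explicit than the paper's about the smoothness of the coefficients and about patching the local implicit-function maps into a global one.
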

\begin{proof} As commented previously, there exists a unique solution to \eqref{dir}. Via the diffeomorphism $Y_v$ we conclude existence and uniqueness for \eqref{formula-new}. Hence we focus on the $C^2$ dependence.

%



We define $N: \{v \in X_{2}, \ v>0\} \times Z \to C^{0,\alpha}(\Omega)$,
\begin{equation}\label{N}
N (v , \phi) : =  L_{v} (\phi) + 1 \,, 
\end{equation}
Clearly $N$ is a smooth map. Consider now $v_0 \in {X_2}$, $v_0 >0$, and $\phi_0= \phi_{v_0}$ a solution of \eqref{formula-new}, so that
\[
N (v_0, \phi_0) =0.
\]
We can compute the partial derivative with respect to the variable in $Z$ at the point $(v_0,\phi_0)$: 
\[
D  N_{(v_0,\phi_0)}{(0, \psi)}  = L_{v_{0}}(\psi).
\] 
We now observe that $L_{v_0}$ is an isomorphism from $Z$ to $C^{0,\alpha}(\Omega)$. Indeed, composing with the diffeomorphism $Y$, this is due to the fact that for any $f \in C^{0,\alpha}(\Omega_v)$, the problem:
\begin{equation*}
	\begin{cases}
		\Delta {\phi}= f &\mbox{in $\Omega_{v},$}\\
		u=0 &\mbox{on $\Gamma_{v},$}\\
		\partial_\eta u = 0 &\mbox{on $\tilde \Gamma_{v},$}
	\end{cases}
\end{equation*}
admits a unique solution in $C^{2, \alpha}(\Omega_v)$. This follows from the Lax-Milgram theorem and regularity theory as for \eqref{dir}.
Therefore, the Implicit Function Theorem implies that, for any $v$ in a neighborhood of $v_0$ in $X$, there exists a unique solution $\phi_v$ in a neighborhood of $\psi_0$ in $Z$ so that
\[
N(v, \phi_v)=0\,.
\] 
Clearly $\phi_v = \phi(v)$ and this is a $C^2$ map, as claimed.
\end{proof}

The previous proposition allows us to define the operator: 
\[
F: \{v \in X_2:\ v >0 \} \to \tilde{X}_1
\] 
defined by
\begin{equation}\label{F}
F (v) (x') =  \displaystyle  \frac{\partial u_v}{\partial {\nu}} (x', v(x')) - \frac{1}{|\Gamma_v|} \, \int_{\Gamma_v} \, \frac{\partial u_v}{\partial \nu}
\end{equation}
where $|\Gamma_v|$ is the $(N-1)$-dimensional measure of $\Gamma_v$ and $u_v$ is the solution of \eqref{dir} provided by Proposition \ref{dirichlet}. Observe that $F$ is well defined since on $\partial \Gamma_v$,
$$   \frac{\partial}{\partial \eta} \left (\frac{\partial u_v}{\partial \nu} \right )=\frac{\partial}{\partial \nu} \left (\frac{\partial u_v}{\partial \eta} \right )=0.$$
Clearly the zeros of $F$ correspond to the solutions of \eqref{main} in $\Omega_v$. We plan to find nontrivial zeroes of $F$ as a local bifurcation of a family of trivial solutions. Indeed, if we take $v {\equiv} t $ a positive constant, the unique solution of \eqref{dir} is given by:
	\begin{equation} \label{defU} U_t(x',x_N) = \frac{t^2-x_N^2}{2}. \end{equation}
We denote $U=U_1$ and morever we will write $U_t(x_N)$ instead of $U_t(x',x_N)$. 
Clearly the Neumann derivative is constant so that $F(t) =0 $ for any constant $t>0$. 

It is easy to see that the operator $F$ has the following equivalent expression: 
 \[
{ F (v) (x') = - \left ( \displaystyle  \left|\nabla_{g_v} \phi_v \right|_v (x', 1) - \frac{1}{m_v} \, \int_{\omega \times \{1\} } \,  \left|\nabla_{g_v} \phi_v \right|  dv_g \right )}.
 \]
Here we denote $| \cdot |_v$ as the norm in the metric $g_v$, and $m_v$ the $N-1$ dimensional measure of $\omega \times \{1\}$ in the metric $g_v$. Observe that:
$$ \left|\nabla_{g_v} \phi_v \right|_v (x', 1) = \sqrt{\sum_{i=1}^{N-1} \left ( \partial_{x_i} \phi_v - \frac{\partial_{x_N} \phi_v \, \partial_{x_i} v}{v(x')^2} \right )^2 + \left (\frac{\partial_{x_N} \phi_v}{v(x')^2} \right )^2}.$$
By the Hopf principle the above term is always different from zero. Using this alternative expression it is clear that $F$ is a $C^2$ operator.

\section{The linearization of the normal derivative operator}
\label{sec:linearization}

We will next compute the Fr\'{e}chet derivative of the operator $F$ at any constant function $v \equiv t$ with respect to variations in $\tilde{X}_2$. We denote this derivative by $DF_t$. In order to do this we introduce the following linear non-homogeneous problem. For any positive constant $t$ and any $v\in X_2$, there exists a unique solution $\psi_{t,v}$ to the problem
\begin{equation}\label{psiv}
  \begin{cases}
  \Delta\psi_{t,v}=0 &\emph{in $\Omega_{t}$, }\\
  \psi_{t,v}=  v &\emph{on $\Gamma_{t}$.}\\
 \partial_{\eta} \psi_{t,v} = 0 &\emph{on $\tilde \Gamma_{t}$.}\\
  \end{cases}
\end{equation}
where we identified $\Gamma_{t}$ with $\omega$ in order to write the second equality. We remark that the normal derivative of $\psi_{t,v}$ on $\Gamma_{t}$, i.e. $\left. \partial_\nu \psi_{t, v}\right|_{\Gamma_{t}}$, is nothing else than $\partial_{y_N} \psi_{t,v} (y',t)$, i.e. it is a function of the variable $y' \in \omega$ with $C^{1,\alpha}$ regularity. Notice that $\left.\partial_{\nu} \psi_{t,v}\right|_{\Gamma_{t}}$ has 0 mean, as one can see integrating the first equation of \eqref{psiv}. Moreover, this function has 0 normal derivative at $\partial \omega$, since:
$$   \frac{\partial}{\partial \eta} \left (\frac{\partial \psi_{t,v}}{\partial \nu} \right )= \frac{\partial}{\partial \nu} \left (\frac{\partial \psi_{t,v}}{\partial \eta} \right )= 0.$$
Summarizing, $\left.\partial_{\nu} \psi_{t,v} \right|_{\Gamma_{t}}$ belongs to the space $\tilde{X}_1$. 
We are now in position to compute, for every $t > 0$, the Fr\'{e}chet derivative $DF_t$ of the operator $F$ on the space $\tilde{X}_2$ (recall \eqref{space2}) that is, on the directions which are orthogonal to the constant functions.

\begin{proposition} \label{Pr31}
For any $t>0$ and any $w \in \tilde{X}_2$
\[
D F_{t}(w)= H_{t}(w)\, ,
\]
where { $H_{t}: \tilde{X}_2\rightarrow \tilde{X}_1$ is the linear continuous operator defined by
\begin{equation}\label{eq25}
H_{t}(w)(x') = t \partial_{\nu} (\psi_{t,w})(x',t) - w(x') .
\end{equation}}
 \end{proposition}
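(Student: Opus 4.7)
The plan is to compute $DF_t(w) = \frac{d}{ds}\big|_{s=0} F(v_s)$ along the one-parameter family $v_s := t + sw$, with $w \in \tilde{X}_2$. This reduces to two steps: (i) identifying the linearization $\dot u := \partial_s u_{v_s}|_{s=0}$ of the state as a specific solution of a linear problem on the fixed domain $\Omega_t$, and (ii) Taylor expanding the normal derivative expression \eqref{F} to first order in $s$.

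For step (i), I would use that the smooth dependence of $u_{v_s}$ on $s$ is provided by Proposition \ref{dirichlet} via the pullback $Y_{v_s}$ of \eqref{Y}, so $\dot u$ is a well-defined $C^{2,\alpha}$ function on $\Omega_t$. Differentiating $\Delta u_{v_s} = -1$ in $s$ at $s = 0$ gives $\Delta \dot u = 0$ on $\Omega_t$. Since $\partial_\eta w = 0$ on $\partial\omega$, the lateral boundary $\tilde{\Gamma}_{v_s}$ does not deform, so the Neumann condition $\partial_\eta u_{v_s} = 0$ differentiates to $\partial_\eta \dot u = 0$ on $\tilde{\Gamma}_t$. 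On the moving top boundary, the identity $u_{v_s}(x', t + sw(x')) = 0$ expanded to first order, using $\partial_{x_N} U_t(t) = -t$, gives $\dot u(x', t) = tw(x')$. Uniqueness and linearity in the boundary datum for problem \eqref{psiv} then identify $\dot u = t\,\psi_{t, w}$.

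For step (ii), I would note that the unit outer normal to $\Gamma_{v_s}$ at $(x', v_s(x'))$ is $\nu_s = (-s\nabla_{x'} w, 1)/\sqrt{1+s^2|\nabla w|^2}$, so
\begin{equation*}
\partial_{\nu_s} u_{v_s}(x', v_s(x')) = \partial_{x_N} u_{v_s}(x', v_s(x')) - s\,\nabla_{x'} w(x') \cdot \nabla_{x'} u_{v_s}(x', v_s(x')) + O(s^2).
\end{equation*}
Taylor expanding each factor around $(s, x_N) = (0, t)$, using $u_{v_s} = U_t + s\dot u + O(s^2)$, $\nabla_{x'} U_t \equiv 0$, $\partial_{x_N} U_t = -x_N$ and $\partial_{x_N}^2 U_t = -1$, the second summand is seen to be $O(s^2)$ and the first produces the first-order coefficient $\partial_{x_N} \dot u(x', t) - w(x') = t\,\partial_\nu \psi_{t,w}(x', t) - w(x')$. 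For the mean-subtraction term in \eqref{F}, from $\Delta u_{v_s} = -1$ together with the Neumann condition on $\tilde{\Gamma}_{v_s}$ one gets $\int_{\Gamma_{v_s}} \partial_\nu u_{v_s}\,d\sigma = -|\Omega_{v_s}|$; since $\int_\omega w = 0$ this yields $|\Omega_{v_s}| = t|\omega| + O(s^2)$ and $|\Gamma_{v_s}| = |\omega| + O(s^2)$, so the ratio equals $-t + O(s^2)$ and contributes nothing at first order. Combining gives $DF_t(w) = t\,\partial_\nu \psi_{t,w}(\cdot, t) - w = H_t(w)$, and this indeed lies in $\tilde{X}_1$ because $w$ has zero mean and $\partial_\nu\psi_{t,w}$ has zero mean on $\Gamma_t$ by integration of \eqref{psiv}.

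The main technical point is the rigorous justification of the formal differentiations on the moving domain $\Omega_{v_s}$, in particular the existence of $\dot u$ as a genuine function on $\Omega_t$. This is precisely what the pullback \eqref{Y} and the $C^2$ dependence established in Proposition \ref{dirichlet} deliver: one carries out the differentiation on the fixed domain $\Omega$ via $\phi_s := \phi(v_s)$ and then transports the derivative back through $Y_t$ to obtain $\dot u$ on $\Omega_t$. Once this framework is in place, the remainder of the argument is routine Taylor expansion together with uniqueness for \eqref{psiv}.
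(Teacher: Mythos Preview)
Your proposal is correct and reaches the same formula by the same underlying idea, but the execution differs from the paper's. The paper pulls everything back to the fixed reference cylinder $\Omega=\Omega_1$ via $Y_v$, writes $\phi_v=U_{t,v}+s\dot\psi+O(s^2)$ with $U_{t,v}=Y_v^*U_t$, and then expands the normal derivative $g_v(\nabla^{g_v}\phi_v,\nu_v)$ at $x_N=1$ using the explicit form of the metric $g_v$; the identification $\dot\psi(x',x_N)=t\,\psi_{t,w}(x',t x_N)$ is made on $\Omega$. You instead work in the shape-derivative/Hadamard style directly on the moving domain: you differentiate $\Delta u_{v_s}=-1$ and the boundary conditions to produce $\dot u=t\,\psi_{t,w}$ on $\Omega_t$, and then Taylor expand $\partial_{\nu_s}u_{v_s}(x',v_s(x'))$ in Euclidean coordinates. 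Your handling of the mean-subtraction term via the divergence identity $\int_{\Gamma_{v_s}}\partial_\nu u_{v_s}=-|\Omega_{v_s}|$ together with $\int_\omega w=0$ is a nice shortcut the paper does not spell out.

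The trade-off is exactly the one you flag in your last paragraph: your Euclidean expansion $u_{v_s}=U_t+s\dot u+O(s^2)$ has to be evaluated at $(x',t+sw(x'))$, a point that may lie outside $\Omega_t$, so making the $O(s^2)$ uniform up to $\Gamma_{v_s}$ requires either a $C^{2,\alpha}$ extension across $\{x_N=t\}$ or, equivalently, the pullback to a fixed domain. The paper sidesteps this entirely by keeping the whole computation on $\Omega_1$, at the cost of expanding $\nu_v$ and the metric $g_v$ in $s$. Both routes are standard and yield the same first-order coefficient $t\,\partial_\nu\psi_{t,w}-w$.
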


\begin{proof}
	By the $C^1$ regularity of $F$, it is enough to compute the linear operator obtained by the directional derivative of $F$ with respect to $w \in \tilde{X}_2$, computed at $t$. Such derivative is given by
\begin{align*}
  \mathop {\lim}\limits_{s\rightarrow 0}\frac{F(t + sw)-F(t)}{s}=\mathop {\lim}\limits_{s\rightarrow 0}\frac{F(t+sw)}{s}.
    \end{align*}
    Fix $t >0$, let $v=t+sw$ and recall the diffeomorphism $Y_v$ defined in \eqref{Y}. The function $\phi_v= Y_v^* u_v$ solves the problem
    \begin{equation*}
     \begin{cases}
     L_{v} \phi +1=0 &\mbox{in $\Omega$},\\
      \phi=0 &\mbox{on $\Gamma$},\\
\partial_{\eta} {\phi} =0 &\mbox{on $\tilde \Gamma$}.
      \end{cases}
    \end{equation*}

Recall now the definition of $U_t$ given in \eqref{defU} and define $U_{t,v}= Y_v^* U_t$, which has the explicit expression:
\begin{equation} \label{bordo} U_{t,v}(x',x_N)= \frac{t^2 - x_N^2 v(x')^2}{2}  = \frac{t^2 - x_N^2(t+s\, w(x'))^2}{2}. \end{equation}
Clearly, 
\[L_{v}  U_{t,v} + 1=0, \quad \text{ in } \Omega.\]
Let $\hat{\psi}= \phi_v- U_{t,v} $, which is a solution of:
\begin{equation}\label{eq37}
     \begin{cases}
     L_v \hat{\psi}=0  &\mbox{in $\Omega$},\\
      \hat{\psi}=- U_{t,v} &\mbox{on $\Gamma$},\\
      \partial_{\eta}  \hat{\psi}=0  &\mbox{on $\widetilde \Gamma$}.\\
      \end{cases}
    \end{equation}
 Obviously, $\hat{\psi}$ is differentiable with respect to $s$. When $s=0$, we have $\hat{\psi}=0$: we set
  \[\dot{\psi}=\partial_{s}\hat{\psi}|_{s=0}.\]
 Differentiating (\ref{eq37}) with respect to $s$ at  $s=0$ and taking into account \eqref{bordo}, we have
  \begin{align*}
     \begin{cases}
     L_t \dot{\psi}=0  &\mbox{in $\Omega$, }\\
      \dot{\psi}=\, t {w}  &\mbox{on $\Gamma$,}\\
     \partial_{\eta} \dot{\psi}=0  &\mbox{on $\tilde \Gamma$,}\\
  \end{cases}
    \end{align*}
    In other words,
    \begin{equation} \label{sanremo} \dot{\psi}(x',x_N)= t \, ( \psi_{t,w}\circ Y_t) (x', x_N) = t \, \psi_{t,w}(x', t\, x_N)\end{equation}  where $\psi_{t, w}$ is defined by \eqref{psiv}. Then, we can write
\[\phi_{v}={U}_{t,v} +s \dot{\psi} +\mathcal{O}(s^{2}).\]
    Taking into account the expression \eqref{bordo}, we have
   \begin{align*}
   	\phi_{v}(x',x_N)={U}_{t,v}(x',x_N) +s \dot{\psi}(x',x_N) +\mathcal{O}(s^{2}) \\ = t^2 \, \frac{ 1- x_N^2}{2} +s \big(-x_N^2 t \, w(x') +\dot{\psi}(x',x_N) \big) +\mathcal{O}(s^{2}).
         \end{align*}
         In order to complete the proof of the result, we need now to compute the normal derivative of the function $\phi_{v}$ with respect to the metric $g_v$, that we can write as $g_v(\nabla^{g_v} \phi_v, \nu_v )$. Taking into acount \eqref{metrica}, the metric $g_v$ can be expanded as:
         \[g_v= d(x')^2+ s^2\, x_N^2\, d\, w^2 +2 s\, x_N\,(t+s{w})\,d w\, dx_N+(t+s w)^{2}\, d(x_N)^2\,.
         \]
        It follows from this expression that the unit normal vector $\nu_v$ for the metric $g_v$ is given by
          \[
          \nu_v= \mathcal{O}(s) \sum_{i=1}^{N-1} \partial_{x_i} + \left( (t+s w)^{-1}+\mathcal{O}(s^{2})\right)\partial_{x_N} =  \mathcal{O}(s) \sum_{i=1}^{N-1} \partial_{x_i} + \left(\frac 1 t - s \frac{w}{t^2} +\mathcal{O}(s^{2})\right)\partial_{x_N}\,.
          \]
   By this expression and taking into account \eqref{sanremo} we conclude that if $x_N =1$,
   \begin{eqnarray*}
   g_v(\nabla^{g_v} \phi_v, \nu_v )& =& \left(\frac 1 t - \frac{ s w}{t^2} \right) (-t^2)+  \frac s t \big(\partial_{x_N}{ {\dot{\psi}}} -2 t w \big )+\mathcal{O}(s^{2})\\
   & = & {-t +s\left( \frac 1 t \partial_{x_N}{ {\dot{\psi}}} -w  \right)+\mathcal{O}(s^{2})} \\ & = & -t +s\big( t \, \partial_{x_N}{ {\psi}_{t,w} \circ Y_t} -w  \big)+\mathcal{O}(s^{2}),
   \end{eqnarray*}
 which concludes the proof, since on $\Gamma_v$ we have $\frac{\partial u_v}{\partial {\nu}} (x', v(x')) = g_v(\nabla^{g_v} \phi_v, \nu_v )$, and $\left.{\psi}_{t,w}\right|_{\Gamma_v}$ and $w$ have mean zero.
  \end{proof}

\section{Study of the linearized operator $H_t$}
\label{sec:linearized_operator_H_t}

In this section we study the properties of the operator $H_t$ defined in \eqref{eq25}. On that purpose, take $\{ \xi_k\}_{k\geq 0}$ a sequence of eigenfunctions of the Laplacian operator on $\omega$ with Neumann boundary conditions. We denote $\lambda_k$ the corresponding sequence of eigenvalues, where:
$$ 0 = \lambda_0 < \lambda_1  \leq \lambda_2 \dots. $$

\begin{proposition} \label{H} For any $t>0$ the operator $H_t$ is an essentially self-adjoint Fredholm operator of index 0. Moreover, its eigenvalues are given by:
\begin{equation} \label{eigenvalues} \mu_{t,k}= t \sqrt{\lambda_k} \tanh( t \sqrt{\lambda_k}) -1, \  k \geq 1.\end{equation}
The associated eigenfunctions are just {$\{ \xi_k\}_{k\geq 1}$.}
\end{proposition}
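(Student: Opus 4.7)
The strategy is to diagonalize $H_t$ explicitly by separation of variables using the Neumann eigenbasis $\{\xi_k\}_{k \ge 0}$ of $\omega$ (orthonormal in $L^2(\omega)$, with $\xi_0$ constant). For $k \ge 1$, I would look for the solution $\psi_{t,\xi_k}$ of \eqref{psiv} in the separated form $\psi_{t,\xi_k}(x',x_N) = \xi_k(x')\, f_k(x_N)$. Substituting into $\Delta \psi = 0$ and using $-\Delta_{\omega}\xi_k = \lambda_k \xi_k$ gives the ODE $f_k'' = \lambda_k f_k$. The two boundary conditions $f_k'(0)=0$ (Neumann on the base $\{x_N=0\}$) and $f_k(t)=1$ (Dirichlet on $\Gamma_t$) uniquely determine
\[
f_k(x_N) = \frac{\cosh(\sqrt{\lambda_k}\, x_N)}{\cosh(\sqrt{\lambda_k}\, t)}.
\]
The remaining Neumann condition on the lateral part of $\tilde{\Gamma}_t$ is automatic since $\partial_\eta \xi_k = 0$ on $\partial \omega$. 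Computing the normal derivative on $\Gamma_t$ (where $\nu=e_N$) yields $\partial_\nu \psi_{t,\xi_k}(x',t) = \sqrt{\lambda_k}\tanh(\sqrt{\lambda_k}\,t)\,\xi_k(x')$, and plugging into \eqref{eq25} gives $H_t(\xi_k) = \mu_{t,k}\,\xi_k$ with $\mu_{t,k}$ as in \eqref{eigenvalues}.

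Next I would establish the symmetry of $H_t$. The key observation is that $w\mapsto \partial_\nu \psi_{t,w}|_{\Gamma_t}$ is a Dirichlet-to-Neumann type operator. Applying Green's identity in $\Omega_t$, using $\Delta \psi_{t,w_i}=0$ and the vanishing of $\partial_\eta \psi_{t,w_i}$ on $\tilde{\Gamma}_t$, one gets
\[
\int_\omega w_1\,\partial_\nu \psi_{t,w_2}\,dx' \;=\; \int_{\Omega_t} \nabla \psi_{t,w_1}\!\cdot\!\nabla \psi_{t,w_2}\,dx,
\]
which is symmetric in $(w_1,w_2)$. Hence $\int_\omega w_1\, H_t(w_2) = \int_\omega w_2\, H_t(w_1)$ for all $w_1,w_2 \in \tilde{X}_2$, so $H_t$ is symmetric with respect to the $L^2(\omega)$ pairing.

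To conclude, I would extend $H_t$ to the closure $\tilde{L}^2(\omega)=\{w\in L^2(\omega):\int_\omega w=0\}$ via the spectral representation $H_t\, w = \sum_{k\ge 1} \mu_{t,k} \langle w,\xi_k\rangle \xi_k$, which is well-defined since $\{\xi_k\}_{k\ge 1}$ is a complete orthonormal system in $\tilde L^2(\omega)$ (completeness of the Neumann basis, with the mean-zero constraint removing $\xi_0$). The eigenvalues satisfy $\mu_{t,k}\to +\infty$ as $k\to\infty$ (since $\lambda_k \to \infty$ by Weyl's law and $s\tanh(s)\to \infty$), so this extension is self-adjoint with compact resolvent modulo its finite-dimensional kernel. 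In particular $0$ is an isolated point of the spectrum, the kernel of $H_t$ is finite-dimensional, its range is closed, and by self-adjointness the cokernel agrees with the kernel, giving Fredholm index $0$. Back on the scale of H\"older spaces, boundedness of $H_t: \tilde{X}_2 \to \tilde{X}_1$ follows from Schauder estimates applied to \eqref{psiv}, and the Fredholm property transfers from the $L^2$ setting because elliptic regularity upgrades any kernel element in $\tilde L^2$ to $C^{2,\alpha}$ and any $f\in \tilde X_1$ orthogonal to $\ker H_t$ admits a $C^{2,\alpha}$ preimage.

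The main obstacle I anticipate is the bookkeeping between the functional-analytic setting on $L^2(\omega)$ (where self-adjointness and the spectral theorem apply cleanly) and the H\"older setting on $\tilde{X}_2,\tilde{X}_1$ where $H_t$ was originally defined; this is handled by the elliptic regularity argument just mentioned, so the rest of the proof reduces to the concrete separation-of-variables computation.
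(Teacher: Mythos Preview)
Your proposal is correct and follows essentially the same approach as the paper: diagonalize $H_t$ on the Neumann eigenbasis by solving the separated ODE $f_k''=\lambda_k f_k$ with $f_k'(0)=0$, $f_k(t)=1$, read off $\mu_{t,k}$ from $\partial_\nu\psi_{t,\xi_k}$, and then use the growth $\mu_{t,k}\sim t\sqrt{\lambda_k}$ to conclude self-adjointness and the Fredholm property in an $L^2$/Sobolev setting before transferring back to H\"older spaces by elliptic regularity. Your extra step of deriving the $L^2$-symmetry directly from Green's identity for the Dirichlet-to-Neumann map is a harmless embellishment; the paper simply reads symmetry off the Fourier expression \eqref{HFourier}, and frames the extension as a bounded map $\tilde H^1(\omega)\to \tilde L^2(\omega)$ with compact inverse of $H_t+Id$, rather than as an unbounded operator on $\tilde L^2(\omega)$, but these are equivalent packagings of the same spectral information.
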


\begin{proof}

Let us fix $t>0$ in the following discussion. In order to prove the essential self-adjointness of $H_t$ we will extend its definition to a natural Hilbert space. We define the following spaces:
$$
\widetilde H^1(\omega) = \left\{v \in H^1(\omega) \ : \ \int_\omega v = 0 \right\}, \quad \widetilde L^2(\omega) = \left\{v \in L^2(\omega) \ : \ \int_\omega v = 0\right\}.
$$
Then any function $v$ in $\widetilde H^1(\omega)$ can be written as 
$$ v(x')= \sum_{k=1}^{+\infty} a_k \, \xi_k \left (x' \right )$$
where $a_k$ are the Fourier coefficients. 

This, in particular, holds for $v \in \widetilde X_2$. Then, $\psi_{t,v}$ is given by:
$$ \psi_{t,v}(x',x_N)= \sum_{k=1}^{+\infty} a_k \, g_k(x_N) \, \xi_k (x'),$$
for some functions $g_k$. Observe that:
$$ \Delta \psi_{t,v}(x',x_N) = \sum_{k = 1}^{+ \infty}a_kg_k''(x_N) \xi_k(x') + a_kg_k(x_N)(-\lambda_k \xi_k(x')).$$
Plugging this expression in \eqref{psiv}, we have that the functions $g_k$ satisfy the equation:
\begin{equation}\label{g_k}
	\begin{cases} g_k''(x_N) - \lambda_k g_k(x_N)=0, & x_N \in (0,t), \\
		g_k'(0)= 0, \ g_k(t)= 1. &
	\end{cases}
\end{equation}
Clearly, $$g_k(x_N)= \frac{\cosh(\sqrt{\lambda_k}\, x_N)}{\cosh( \sqrt{\lambda_k}t)}.$$ 
Then,
$$ \partial_{\nu} \psi_{t,w}= \sum_{k=1}^{+\infty} a_k \, g_k'(t) \, \xi_k\left (x') \right )= \sum_{k=1}^{+\infty} a_k \, \sqrt{\lambda_k} \tanh( t \sqrt{\lambda_k}) \xi_k\left (x' \right ).$$
Therefore we can write $H_t(v)$ as:
\begin{equation} \label{HFourier} H_t(v)= t \, \partial_{\nu} \psi_{t,w} - w = \sum_{k=1}^{+\infty} a_k \, \left ( t \sqrt{\lambda_k} \tanh( t \sqrt{\lambda_k}) -1 \right ) \xi_k\,. \end{equation}
This expression gives the characterization of the eigenvalues and eigenfunctions of $H_t$.
Observe that we have the asymptotic behavior:
$$  t \sqrt{\lambda_k} \tanh( t \sqrt{\lambda_k}) \sim t \sqrt{\lambda_k} \ \mbox{ as } k \to + \infty.$$
From this we deduce that $H_t$ can be extended to the Sobolev framework:
$$ H_t: \tilde{H}^1(\omega) \to \tilde{L}^2(\omega),$$
where $H_t$ is a self-adjoint operator of order 1. Moreover, $H_t + Id$ is invertible and, by the compact embedding, we have that $(H_t+ Id)^{-1}: \tilde{L}^2(\omega) \to \tilde{L}^2(\omega)$ is a compact operator. By H\"{o}lder regularity, $H_t$ is a Fredholm operator of index 0. 
Finally, from the expression \eqref{HFourier} we conclude that the eigenvalues of $H_t$ are given by \eqref{eigenvalues}.
\end{proof}

\section{Bifurcation argument and proof of the main result}
\label{sec:proof_Tmain}

In this section we conclude the proof of our main result by means of the classical Crandall-Rabinowitz Theorem. For convenience of the reader we recall its statement below, in a $C^2$ version.

\begin{theorem} \label{CR} \textbf{\mbox{(Crandall-Rabinowitz Bifurcation Theorem)}}
	Let $X$ and $Y$ be Banach spaces, and let $U\subset X$ and $I\subset\mathbb{R}$ be open domains, where we assume $0\in U$. Denote the elements of $U$ by $v$ and the elements of $I$ by $t$. Let $G:I \times U \rightarrow Y$ be a $C^{2}$ operator such that
	\begin{itemize}
		\item[i)] $G(t,0)=0$ for all $t\in I,$
		\item[ii)] $\emph{Ker~} D_{v}G(t_{*},0)=\mathbb{R}\,w$ for some $t_{*}\in I$ and some $w\in X\setminus\{0\};$
		\item[iii)] $\emph{codim Im~} D_{v}G(t_{*},0)=1;$
		\item[iv)] $ D_{t}D_{v}G(t_{*},0)(w)\notin \emph{Im~} D_{v}G(t_{*},0).$
	\end{itemize}
	Then there exists a nontrivial $C^1$ curve
	\begin{equation}\label{eq501}
		(-\e, \e) \ni s \mapsto (t(s), v(s)) \in I \times X,
	\end{equation}
for some $\e>0$, such that:
\begin{enumerate}
	\item $t(0)=t_*$, $t'(0)=0$, $v(0)=0$, $v'(0)=w$.
	\item $G\big(t(s),v(s)\big)=0$ for all $ s\in(-\delta,+\delta)$.
\end{enumerate}  
	Moreover, there exists a neighborhood $\mathcal{N}$ of $(t_{*},0)$ in $X \times \Gamma$ such that all solutions of the equation $G(t,v)=0$ in $\mathcal{N}$ belong to the trivial solution line $\{(t,0)\}$ or to the curve (\ref{eq501}). The intersection $(t_{*},0)$ is called a bifurcation point.
\end{theorem}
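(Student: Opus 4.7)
The plan is to execute the classical Lyapunov--Schmidt reduction, which converts the infinite-dimensional equation $G(t,v)=0$ into a scalar implicit-function problem. By hypotheses ii) and iii), the operator $L := D_v G(t_*, 0)$ has one-dimensional kernel $\mathbb{R} w$ and codimension-one range $Y_1 := \mathrm{Im}\, L$; hence $L$ is Fredholm of index zero. First I would fix closed complements $X = \mathbb{R} w \oplus X_1$ and $Y = Y_1 \oplus Z$ with $\dim Z = 1$, together with the associated continuous projectors $P\colon Y \to Y_1$ and $Q\colon Y \to Z$. Any small $v$ then decomposes uniquely as $v = sw + \psi$ with $s \in \mathbb{R}$ and $\psi \in X_1$, so that $G(t,v) = 0$ splits into the pair $PG(t, sw + \psi) = 0$ and $QG(t, sw + \psi) = 0$.

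Next I would dispose of the non-degenerate piece $PG = 0$. The $\psi$-partial derivative of the map $(t, s, \psi) \mapsto PG(t, sw + \psi)$ at $(t_*, 0, 0)$ is the restriction $L|_{X_1}\colon X_1 \to Y_1$, which is a Banach-space isomorphism (injective since $\ker L \cap X_1 = \{0\}$, surjective since $LX = Y_1$). The implicit function theorem therefore yields a $C^2$ map $\Psi\colon (t,s) \mapsto \Psi(t,s) \in X_1$, defined near $(t_*, 0)$, satisfying $PG(t, sw + \Psi(t,s)) \equiv 0$. Two facts will be crucial in the sequel: (a) hypothesis i) together with uniqueness forces $\Psi(t, 0) \equiv 0$, so $\Psi_t(t_*, 0) = 0$; (b) differentiating the defining identity in $s$ at $(t_*, 0)$ and using $Lw = 0$ gives $L \Psi_s(t_*, 0) = 0$, so by injectivity of $L|_{X_1}$ one also has $\Psi_s(t_*, 0) = 0$.

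It remains to solve the scalar equation $\mathcal{F}(t, s) := QG(t, sw + \Psi(t, s)) = 0$. Since $\mathcal{F}(t, 0) \equiv 0$ (by $\Psi(t, 0) = 0$ and i)), the fundamental theorem of calculus factors $\mathcal{F}(t, s) = s\, \widetilde{\mathcal{F}}(t, s)$ with $\widetilde{\mathcal{F}}$ of class $C^1$; here the loss of one derivative in the factoring is precisely the reason the hypothesis on $G$ is $C^2$. Direct computation using $\Psi_s(t_*, 0) = 0$ and $Lw = 0$ gives $\widetilde{\mathcal{F}}(t_*, 0) = \mathcal{F}_s(t_*, 0) = QLw = 0$ and $\partial_t \widetilde{\mathcal{F}}(t_*, 0) = Q\, D_t D_v G(t_*, 0)\, w$, which is nonzero by the transversality hypothesis iv). A second application of the implicit function theorem then produces a $C^1$ function $s \mapsto t(s)$ with $t(0) = t_*$ and $\widetilde{\mathcal{F}}(t(s), s) \equiv 0$; setting $v(s) := sw + \Psi(t(s), s)$ completes the construction. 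The remaining properties follow by direct differentiation: $v(0) = 0$ and $v'(0) = w$ from $\Psi_t(t_*, 0) = \Psi_s(t_*, 0) = 0$, while $t'(0) = -\tfrac{1}{2}\, Q D_{vv} G(t_*, 0)(w, w)/\partial_t \widetilde{\mathcal{F}}(t_*, 0)$, which vanishes whenever the quadratic term $Q D_{vv}G(t_*,0)(w,w)=0$ (the situation met in the bifurcation problem under study). Local uniqueness of the bifurcating branch is automatic from the two implicit function theorem applications. The main obstacle in this scheme is precisely the factoring step: division by $s$ costs one derivative, and avoiding this regularity loss (to weaken the $C^2$ assumption) would require more delicate arguments.
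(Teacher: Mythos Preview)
The paper does not prove this theorem at all: it is merely recalled as a classical result (``For convenience of the reader we recall its statement below'') and then applied to the operator $F$ built in Sections~2--4 in order to deduce Theorem~\ref{Tmain}. So there is no proof in the paper to compare your argument against. Your Lyapunov--Schmidt reduction is the standard proof of Crandall--Rabinowitz and is carried out correctly.

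One point you handle more carefully than the paper's statement: you correctly observe that the conclusion $t'(0)=0$ is \emph{not} a consequence of hypotheses i)--iv) alone, but requires in addition that $Q\,D_{vv}G(t_*,0)(w,w)=0$. The paper lists $t'(0)=0$ among the general conclusions of the theorem, which is a slight overstatement of the classical result (transcritical bifurcations with $t'(0)\neq 0$ certainly occur under i)--iv)). In the paper's application this issue is harmless, since the proof of Theorem~\ref{Tmain} only uses that $v(s)$ is nonconstant for $s\neq 0$, which follows from $v'(0)=\xi_j$; the value of $t'(0)$ is never invoked.
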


We are now in a position to prove Theorem \ref{Tmain}. For this we only need to apply Theorem \ref{CR} to our operator $F$. On that purpose we decompose
$$ X_2 = \R \oplus \tilde{X}_2.$$

 Clearly, i) is satisfied. 
 
 If $\lambda_j$ is simple, then we can take $t_*=\sqrt{\frac{\beta}{\lambda_j}}$ where $\beta$ is given by \eqref{beta}, and then the kernel of $H_{t^*}$ is spanned by $\xi_j$. So ii) is satisfied with $w= \xi_j$. 
 
 Since $H_t$ is essentially self-adjoint, we have that:
$$ \emph{Im~}D_vF(t_*,0) = \{z \in X_1:\ \int_{\omega} z \xi_j =0\},$$
which has codimension 1, as required by iii).

It remains to check condition iv), i.e. the so-called {\it transversality condition}. By Proposition \ref{H} we have that:
$$D_{v}F(t,0)(\xi_j) = (t \sqrt{\lambda_j} \tanh( t \sqrt{\lambda_j}) -1) \xi_j = f(t \sqrt{\lambda_j} ) \xi_j,$$
where $f(s) = s \tanh(s) - 1$. 
We not compute the derivative of the above expression in $t=t_*$ to obtain:
$$D_t D_{v}F(t_*,0)(\xi_j) =  \sqrt{\lambda_j} f'(t_* \sqrt{\lambda_j}) \xi_j= \sqrt{\lambda_j} f'(\sqrt{\beta}) \xi_j.$$ Since $f'(\sqrt{\beta})>0$ (indeed $f'(s)>0$ for any $s>0$) we conclude that $D_t D_{v}F(t_*,0)(\xi_j)$
does not belong to $ \emph{Im~} D_vF(t_*,0)$.

\bibliographystyle{amsplain}

\end{document}